\documentclass[twoside]{article}
\usepackage{amssymb}
\usepackage{indentfirst}
\usepackage{dsfont}
\usepackage{amsmath,amsthm,amsfonts,amssymb,graphicx}
\usepackage[colorlinks,linkcolor=blue,anchorcolor=blue,citecolor=green]{hyperref}
\usepackage{bibentry}
\allowdisplaybreaks

\newtheorem{theorem1}{Theorem}[section]
\newtheorem{proposition}{Proposition}[section]
\newtheorem{lemma}{Lemma}[section]
\numberwithin{equation}{section}
\pagestyle{myheadings} \textwidth 140mm \textheight 230mm \topmargin
-0.4cm \oddsidemargin 1cm \evensidemargin 1cm
\begin{document}
\baselineskip 14pt \noindent \thispagestyle{empty}
\bibliographystyle{ieeetr}

\markboth{\centerline{\rm E. Bi \; \& \; Z. Tu}}{\centerline{\rm
Cartan-Hartogs domains}}

\begin{center} \Large\bf{Remarks on the canonical metrics on the Cartan-Hartogs domains}
\end{center}

\begin{center}
\noindent\text{Enchao Bi$^1$ $^{*}$} \; \&  \; Zhenhan Tu$^2$ \\
\vskip 4pt

\noindent\small {${}^1$School of Mathematics and Statistics, Qingdao
University, Qingdao, Shandong 266071, P.R. China} \\

\noindent\small {${}^2$School of Mathematics and Statistics, Wuhan
University, Wuhan, Hubei 430072, P.R. China} \\

\noindent\text{Email: bienchao@whu.edu.cn (E. Bi),\;
zhhtu.math@whu.edu.cn (Z. Tu)}
\renewcommand{\thefootnote}{{}}
\footnote{\hskip -16pt {$^{*}$Corresponding author. \\ } }
\end{center}

\begin{center}
\begin{minipage}{13cm}
{\bf Abstract}\hskip 10pt The Cartan-Hartogs domains are defined as
a class of Hartogs type domains over irreducible bounded symmetric
domains. For a Cartan-Hartogs domain $\Omega^{B}(\mu)$ endowed with
the natural K\"{a}hler metric $g(\mu),$ Zedda conjectured that the
coefficient $a_2$ of the Rawnsley's $\varepsilon$-function expansion
for the Cartan-Hartogs domain $(\Omega^{B}(\mu), g(\mu))$ is
constant on $\Omega^{B}(\mu)$ if and only if $(\Omega^{B}(\mu),
g(\mu))$ is biholomorphically isometric to the complex hyperbolic
space. In this paper, following  Zedda's argument, we give a
geometric proof of the Zedda's conjecture
by computing the curvature tensors of the Cartan-Hartogs domain $(\Omega^{B}(\mu), g(\mu))$. \\

{\bf Keywords}\hskip 10pt  Bounnded symmetric domains
\textperiodcentered \, Cartan-Hartogs domains \textperiodcentered \,
Curvature tensors \textperiodcentered \, K\"{a}hler metrics \\

{\bf Mathematics Subject Classification (2010)}\hskip 10pt 32A25
  \textperiodcentered \, 32M15  \textperiodcentered \, 32Q15
\end{minipage}
\end{center}

\section{Introduction}

The expansion of the Bergman kernel has received a lot of attention
recently, due to the influential work of Donaldson, see e.g.
\cite{Donaldson}, about the existence and uniqueness of constant
scalar curvature K\"{a}hler metrics (cscK metrics). Donaldson used
the asymptotics of the Bergman kernel proved by Catlin \cite{Cat}
and Zelditch \cite{Zeld} and the calculation of Lu \cite{Lu} of the
first coefficient in the expansion to give conditions for the
existence of cscK metrics.

Assume that $D$ is a bounded domain in $\mathbb{C}^n$ and $\varphi$
is a strictly plurisubharmonic function on $D$. Let $g$ be a
K\"{a}hler  metric  on $D$ associated to the K\"{a}hler form
$\omega=\frac{\sqrt{-1}}{2}\partial\overline{\partial}\varphi$. For
$\alpha>0$, let $\mathcal{H}_{\alpha}$ be the weighted Hilbert space
of square integrable holomorphic functions on $(D, g)$ with the
weight $\exp\{-\alpha \varphi\}$, that is,
$$\mathcal{H}_{\alpha}:=\left\{ f\in \textmd{Hol}(D)\;\left |\; \int_{D}\right.|f|^2\exp\{-\alpha \varphi\}\frac{\omega^n}{n!}<+\infty\right\},$$
where $\textmd{Hol}(D)$ denotes the space of holomorphic functions
on $D$. Let $K_{\alpha}$ be the Bergman kernel (namely, the
reproducing kernel) of $\mathcal{H}_{\alpha}$ if
$\mathcal{H}_{\alpha}\neq \{0\}$.  The Rawnsley's
$\varepsilon$-function on $D$ (see Cahen-Gutt-Rawnsley \cite{CGR}
and Rawnsley \cite{R}) associated to the metric $g$  is defined by
\begin{equation*}
 \varepsilon_{\alpha}(z):=\exp\{-\alpha \varphi(z)\}K_{\alpha}(z,\overline{z}),\;\; z\in D.
\end{equation*}
Note the Rawnsley's $\varepsilon$-function depends only on the
metric $g$ and not on the choice of the K\"{a}hler potential
$\varphi$ (which is defined up to an addition with the real part of
a holomorphic function on $D$).
The asymptotics of the Rawnsley's
$\varepsilon$-function $\varepsilon_{\alpha}$ was expressed in terms
of the parameter $\alpha$ for compact manifolds by Catlin \cite{Cat}
and Zelditch \cite{Zeld} (for $\alpha\in \mathbb{N}$) and for
non-compact manifolds by Ma-Marinescu \cite{MM07,MM08}. In some
particular cases it was also proved by Engli\v{s} \cite{End2,Eng1}.

The Cartan-Hartogs domains are defined as a class of Hartogs type
domains over irreducible bounded symmetric domains. Let $\Omega$ be
an irreducible bounded symmetric domain in $\mathbb{C}^d$ of genus
$\gamma$. The generic norm of $\Omega$ is defined by
$$ N(z,
\xi) := (V(\Omega ) K(z, \xi))^{-1/\gamma},$$ where $V(\Omega)$ is
the total volume of $\Omega$ with respect to the Euclidean measure
of $\mathbb{C}^d$ and $K(z, {\xi})$ is its Bergman kernel. Thus $0<
N_{\Omega}(z,\bar{z})\leq 1$ for all $z\in \Omega$ and
$N_{\Omega}(0,0)=1$. For an irreducible bounded symmetric domain
$\Omega$ in $\mathbb{C}^d$ and a positive real number $\mu$, the
Cartan-Hartogs domain $\Omega^{B}(\mu)$ is defined by
\begin{equation*}
  \Omega^{B}(\mu):=\left\{(z,w)\in  \Omega\times \mathbb{C}:\; |w|^2<N(z,{z})^{\mu}
  \right\}.
\end{equation*}
For the Cartan-Hartogs domain $\Omega^{B}(\mu)$, define
\begin{equation*}
 \Phi(z,w):=-\log(N(z,{z})^{\mu}-|w|^2).
\end{equation*}
The K\"{a}hler form $\omega(\mu)$ on  $\Omega^{B}(\mu)$ is defined
by
\begin{equation*}
  \omega(\mu):=\frac{\sqrt{-1}}{2}\partial
\overline{\partial}\Phi.
\end{equation*}
We endow the Cartan-Hartogs
domain $\Omega^{B}(\mu)$ with the K\"{a}hler metric $g(\mu)$ associated to the K\"{a}hler form $w(\mu)$.
For the Cartan-Hartogs domain $(\Omega^{B}(\mu), g(\mu)),$ the
Rawnsley's $\varepsilon$-function admits the following finite expansion (e.g., see Th. 3.1 in Feng-Tu \cite{F-Tu}):
\begin{equation}
  \varepsilon_{\alpha}(z,w)=\sum_{j=0}^{d+1}a_j(z,w)\alpha^{d+1-j}, \;\; (z,w)\in
  \Omega^{B}(\mu).
\end{equation}
By Th. 1.1 of Lu \cite{Lu},  Th. 4.1.2 and Th. 6.1.1 of Ma-Marinescu
\cite{MM07}, Th. 3.11 of Ma-Marinescu \cite{MM08} and Th. 0.1 of
Ma-Marinescu \cite{MM12}, see also Th. 3.3 of Xu \cite{X}, we have

\begin{equation}\label{eauq3}
\left\{
\begin{aligned}
&a_{0}=1,\\
&a_{1}=\frac{1}{2}k_{g(\mu)},\\
&a_{2}=\frac{1}{3}\Delta k_{g(\mu)}+\frac{1}{24}{\vert R_{g(\mu)}\vert}^{2}-\frac{1}{6}{\vert Ric_{g(\mu)}\vert}^{2}+\frac{1}{8}k_{g(\mu)}^{2},\\
\end{aligned}
\right.
\end{equation}
where $k_{g(\mu)}$, $\triangle$, $R_{g(\mu)}$ and $Ric_{g(\mu)}$  denote  the scalar
curvature, the Laplace, the curvature tensor and the Ricci curvature
associated to the metric $g(\mu)$ on the Cartan-Hartogs domain
$\Omega^{B}(\mu)$, respectively.

Let $ {{B}}^{d}$ be the unit ball in $\mathbb{C}^d$  and  let the
metric $g_{hyp}$ on  $ {{B}}^{d}$  be given by
$$
ds^2=-\sum_{i,j=1}^{d}\frac{\partial^2\ln(1-\|z\|^2)}{\partial z_i
\partial \overline{z_j}}dz_i\otimes d\overline{z_j}.$$
We denote by $({B}^d, g_{hyp})$ the complex hyperbolic space. Note that
$g_{hyp}=\frac{1}{d+1}\, g_{B}$ on $ {{B}}^{d}$ for the Bergman
metric $g_{B}$ of $ {{B}}^{d}$. When $\Omega$ is the unit ball in
$\mathbb{C}^d$ and $\mu=1$, then the Cartan-Hartogs domain
$(\Omega^{B}(\mu), g(\mu))$ is the complex hyperbolic space in
$\mathbb{C}^{d+1}.$ With the exception of the complex hyperbolic
space which is obviously homogeneous, each Cartan-Hartogs domain
$(\Omega^{B}(\mu), g(\mu))$ is a noncompact, nonhomogeneous,
complete K\"{a}hler manifold. Further, for some particular value
$\mu_0$ of $\mu$, $g(\mu_0)$ is a K\"{a}hler-Einstein metric (see
Yin-Wang \cite{YW}).

Recently, Loi-Zedda \cite{LZ} and Zedda \cite{Zedda} studied the
canonical metrics on the Cartan-Hartogs domains. By calculating the
scalar curvature $k_{g(\mu)}$, the Laplace $\Delta k_{g(\mu)}$ of
$k_{g(\mu)}$, the norm $|R_{g(\mu)}|^2$ of the curvature tensor
$R_{g(\mu)}$ and the norm $|Ric_{g(\mu)}|^2$ of the Ricci curvature
$Ric_{g(\mu)}$ of a Cartan-Hartogs domain $(\Omega^{B^{d_0}}(\mu),
g(\mu))$, Zedda \cite{Zedda} has proved that if the coefficient
$a_2$ of the Rawnsley's $\varepsilon$-function expansion for the
Cartan-Hartogs domain $(\Omega^{B}(\mu), g(\mu))$ is constant on
$\Omega^{B}(\mu)$, then $(\Omega^{B}(\mu),g(\mu))$ is
K\"{a}hler-Einstein. Moreover, Zedda \cite{Zedda} conjectured that
the coefficient $a_2$ of the Rawnsley's $\varepsilon$-function
expansion for the Cartan-Hartogs domain $(\Omega^{B}(\mu), g(\mu))$
is constant on $\Omega^{B}(\mu)$ if and only if $(\Omega^{B}(\mu),
g(\mu))$ is biholomorphically isometric to the complex hyperbolic
space.

In 2014, Feng-Tu \cite{F-Tu} proved this conjecture by giving the
explicit expression of the the Rawnsley's $\varepsilon$-function
expansion for the Cartan-Hartogs domain $(\Omega^{B}(\mu), g(\mu))$.
The methods in Feng-Tu \cite{F-Tu} are very different from the
argument in Zedda \cite{Zedda}. In this paper, following the
framework of Zedda \cite{Zedda},  we give a geometric proof of the
Zedda's conjecture by computing the curvature tensors of the
Cartan-Hartogs domain $(\Omega^{B}(\mu), g(\mu))$. We will prove the
following result:

\begin{theorem1}
Let $(\Omega^{B}(\mu),g(\mu))$ be a Cartan-Hartogs domain endowed
with the canonical metric $g(\mu)$. Then  the coefficient $a_2$ of
the Rawnsley's $\varepsilon$-function expansion for the
Cartan-Hartogs domain $(\Omega^{B}(\mu), g(\mu))$ is constant on
$\Omega^{B}(\mu)$ if and only if $(\Omega^{B}(\mu), g(\mu))$ is
biholomorphically isometric to the complex hyperbolic space.
\end{theorem1}

Let $\Omega$ be the irreducible bounded symmetric domain endowed
with its Bergman metric $g_{B}$ and let $R_{g_{B}}$ denote the
curvature tensor associated to $(\Omega, g_{B})$. When the
coefficient $a_2$ of the Rawnsley's $\varepsilon$-function expansion
for the Cartan-Hartogs domain $(\Omega^{B}(\mu), g(\mu))$ is
constant on $\Omega^{B}(\mu)$, we will use the curvature tensor
$R_{g_{B}}$ on $\Omega$ to determine $\Omega^{B}(\mu)$, which means,
in this case, $(\Omega^{B}(\mu), g(\mu))$ must be biholomorphically
isometric to the complex hyperbolic space.

As general references for this paper, see Feng-Tu \cite{F-Tu} and
Zedda \cite{Zedda}. For the sake of simplicity, simliar to Zedda
\cite{Zedda}, the Cartan-Hatogs domains in this paper will be
restricted to the Hartogs type domains over the irreducible bounded
symmetric domains  only with one-dimensional fibers.

\section{Preliminaries}

Let $\Omega$ be an irreducible bounded symmetric domain of
$\mathbb{C}^{d}$ of genus $\gamma$ and let $N(z,z)$ denote the
generic norm of $\Omega$. Define
\begin{equation}\label{equa666}
g^{\Omega(\mu)}:=\frac{\mu}{\gamma}g_{B},
\end{equation}
where $g_{B}$ is the Bergman metric of $\Omega$. Then, from Zedda
\cite{Zedda}, we have the following results.

\begin{lemma}[Zedda \cite{Zedda}, Lemma 4]
The scalar curvature $k_{g(\mu)}$ of the Cartan-Hartogs domain
$(\Omega^{B}(\mu),g(\mu))$ is given by
\begin{equation}\label{equa6}
k_{g(\mu)}=\frac{d(\mu(d+1)-\gamma)}{\mu}\frac{N^{\mu}-{\vert w\vert}^{2}}{N^{\mu}}-(d+2)(d+1).
\end{equation}
\end{lemma}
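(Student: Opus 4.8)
The plan is to obtain $k_{g(\mu)}$ from its definition $k_{g(\mu)}=g(\mu)^{\alpha\bar\beta}\,Ric_{g(\mu),\alpha\bar\beta}$ by computing, in this order, the metric $g(\mu)$, its determinant, the Ricci tensor $Ric_{g(\mu)}=-\partial\bar\partial\log\det g(\mu)$, and finally the trace. I use coordinates $(z_1,\dots,z_d,w)$ on $\Omega^{B}(\mu)$, write $X:=N^{\mu}-|w|^2$ so that $\Phi=-\log X$, and set $T_{i\bar j}:=-\partial_i\partial_{\bar j}\log N$, which by \eqref{equa666} equals $\tfrac1\gamma (g_B)_{i\bar j}=\tfrac1\mu g^{\Omega(\mu)}_{i\bar j}$. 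Two structural facts about $\Omega$ will be used: that $\det(g_B)=c\,N^{-\gamma}$ for a constant $c$ (equivalently $\det T=c'N^{-\gamma}$), which follows from $g_B$ being K\"ahler--Einstein with $Ric_{g_B}=-g_B$, i.e.\ from $K_{\Omega}(z,z)=V(\Omega)^{-1}N^{-\gamma}$; and that $\mathrm{Aut}(\Omega)$ lifts to isometries $(z,w)\mapsto(\phi(z),J_\phi(z)^{\mu/\gamma}w)$ of $(\Omega^{B}(\mu),g(\mu))$, since under such a map $\Phi$ changes only by $-\tfrac{\mu}{\gamma}\log|J_\phi|^2$, the real part of a holomorphic function.

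From $g_{\alpha\bar\beta}=\tfrac1{X^2}\big(X_\alpha X_{\bar\beta}-X\,X_{\alpha\bar\beta}\big)$ together with $X_w=-\bar w$, $X_{w\bar w}=-1$ and $X_{i\bar w}=0$, the Hessian $H=(X_{\alpha\bar\beta})$ is block diagonal with $w$-block $(-1)$ and $z$-block $H_z=\mu N^{\mu}\big(-T_{i\bar j}+\tfrac{\mu}{N^2}N_i\bar N_j\big)$. The main computation is the determinant of $g(\mu)$. Writing $g=\tfrac1{X^2}(vv^{*}-XH)$ with $v=(X_\alpha)$ and applying the matrix determinant lemma (and Sherman--Morrison for $H_z^{-1}$), both $\det g$ and the rank-one correction $1-\tfrac1X v^{*}H^{-1}v$ produce the same scalar factor $1-\tfrac{\mu}{N^2}\sum T^{i\bar j}N_i\bar N_j$, which cancels. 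I expect this cancellation to be the heart of the argument; it yields the clean closed form
\begin{equation*}
\det g(\mu)=\frac{\mu^{d}N^{\mu(d+1)}\det T}{X^{d+2}}=c'\,\mu^{d}\,\frac{N^{\mu(d+1)-\gamma}}{(N^{\mu}-|w|^2)^{d+2}}.
\end{equation*}

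With the determinant in hand the Ricci tensor is immediate: since $-\log\det g(\mu)=\mathrm{const}-(\mu(d+1)-\gamma)\log N+(d+2)\log X$, and $\partial_\alpha\partial_{\bar\beta}\log X=-g_{\alpha\bar\beta}$ while $\partial_\alpha\partial_{\bar\beta}\log N=-T^{\mathrm{ext}}_{\alpha\bar\beta}$ (the extension of $T$ by zero in the $w$-direction), one gets
\begin{equation*}
Ric_{g(\mu),\alpha\bar\beta}=(\mu(d+1)-\gamma)\,T^{\mathrm{ext}}_{\alpha\bar\beta}-(d+2)\,g_{\alpha\bar\beta}.
\end{equation*}
Tracing against $g(\mu)^{-1}$ gives $k_{g(\mu)}=(\mu(d+1)-\gamma)\,g(\mu)^{\alpha\bar\beta}T^{\mathrm{ext}}_{\alpha\bar\beta}-(d+2)(d+1)$, so everything reduces to the single invariant trace $g(\mu)^{\alpha\bar\beta}T^{\mathrm{ext}}_{\alpha\bar\beta}$. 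Here I would invoke the isometries above: since $T^{\mathrm{ext}}=\tfrac1\gamma\pi^{*}g_B$ (with $\pi(z,w)=z$) and $g(\mu)$ are both invariant, this trace depends only on $t:=|w|^2/N^{\mu}$, so it may be evaluated at $z=0$. There $N=1$, $N_i=0$, $N_{i\bar j}=-\delta_{ij}$, the metric is diagonal with $g_{i\bar j}=\tfrac{\mu}{1-|w|^2}\delta_{ij}$ and $g_{w\bar w}=(1-|w|^2)^{-2}$, whence $g(\mu)^{\alpha\bar\beta}T^{\mathrm{ext}}_{\alpha\bar\beta}=\tfrac{d(1-|w|^2)}{\mu}=\tfrac{d}{\mu}\cdot\tfrac{N^{\mu}-|w|^2}{N^{\mu}}$. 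Substituting yields the asserted formula. As a check, the ball case $\Omega=B^d$, $\mu=1$, $\gamma=d+1$ makes the first term vanish and gives $k=-(d+1)(d+2)$, the scalar curvature of complex hyperbolic $(d+1)$-space.
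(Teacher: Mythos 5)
Your proof is correct, and every step checks out on recomputation: with $X=N^{\mu}-|w|^{2}$ one indeed gets $H_z=\mu N^{\mu}\bigl(-T+\tfrac{\mu}{N^{2}}N\bar N^{t}\bigr)$, the matrix determinant lemma gives $\det H_z\propto\det(-T)\,(1-s)$ with $s=\tfrac{\mu}{N^{2}}\sum T^{i\bar j}N_i N_{\bar j}$, the rank-one correction evaluates to $1-\tfrac1X v^{*}H^{-1}v=\tfrac{N^{\mu}}{X(1-s)}$, and the factors $(1-s)$ cancel to yield $\det g(\mu)=\mu^{d}N^{\mu(d+1)}\det T/X^{d+2}$; combined with $\det g_B=cN^{-\gamma}$ (Kähler--Einstein property of the Bergman metric) this gives $Ric_{g(\mu)}=(\mu(d+1)-\gamma)T^{\mathrm{ext}}-(d+2)g(\mu)$ and hence the stated formula, since the invariant trace equals $\tfrac{d}{\mu}(1-t)$ at $z=0$. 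Note, however, that the paper itself contains no proof of this lemma: it is imported verbatim from Zedda's Lemma 4, and your argument is essentially a self-contained reconstruction of that source's computation (determinant of the metric, then Ricci, then trace, with the fiberwise lifts of $\mathrm{Aut}(\Omega)$ reducing everything to $z=0$), so methodologically you are aligned with the cited proof rather than diverging from it. Two small points worth making explicit if you write this up: (i) the Sherman--Morrison manipulation presupposes $1-s\neq 0$, but since both sides of the resulting determinant identity are real-analytic, the formula extends across any degenerate locus by continuity; (ii) the normalizations $N_i(0)=0$ and $N_{i\bar j}(0)=-\delta_{ij}$ use the circled Harish--Chandra realization (the same normalization $[g_B]_{\alpha\bar\beta}(0)=\gamma\delta_{\alpha\beta}$ the paper invokes from Kor\'anyi in its appendix); in fact the second is not needed, since at $z=0$ one has $g_{i\bar j}=\tfrac{\mu}{1-|w|^{2}}T_{i\bar j}(0)$, so the trace $g^{i\bar j}T_{i\bar j}=\tfrac{d(1-|w|^{2})}{\mu}$ is independent of how $T(0)$ is normalized. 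Your hyperbolic-space consistency check at the end is also correct and confirms the curvature convention $k=g^{\alpha\bar\beta}Ric_{\alpha\bar\beta}$ implicit in \eqref{equa6}.
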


\begin{lemma}[Zedda \cite{Zedda}, Lemma 8]
The norm with respect to $g(\mu)$ of the curvature tensor $R_{g(\mu)}$ of the
Cartan-Hartogs domain $(\Omega^{B}(\mu),g(\mu))$ when evaluated at
any point $(0,w)\in \Omega^{B}(\mu)\subseteq \Omega \times
\mathbb{C}$ is given by
\begin{equation}\label{equ7}
[{\vert R_{g(\mu)}\vert}^{2}]_{z=0}=(1-{\vert w\vert}^{2})^{2}{\vert
R_{g^{\Omega(\mu)}}\vert}^{2}-4{\vert w\vert}^{2} (1-{\vert
w\vert}^{2})k_{g^{\Omega(\mu)}}+2d(d+1){\vert w\vert}^{4}+4(d+1),
\end{equation}
where  $k_{g^{\Omega(\mu)}}$ is the scalar curvature of
$(\Omega,g^{\Omega(\mu)})$ and ${\vert R_{g^{\Omega(\mu)}}\vert}$ is
the norm with respect to $g^{\Omega(\mu)}$ of the curvature tensor
$R_{g^{\Omega(\mu)}}$ of   $(\Omega,g^{\Omega(\mu)})$.
\end{lemma}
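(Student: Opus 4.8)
The plan is to compute the full curvature tensor $R_{g(\mu)}$ of $(\Omega^{B}(\mu),g(\mu))$ directly from the K\"ahler potential $\Phi=-\log(N^{\mu}-|w|^{2})$ and then to specialize to the points $(0,w)$, where the symmetric structure of $\Omega$ forces drastic simplifications. The key preliminary step is to rewrite
\[
\Phi=-\mu\log N-\log\!\big(1-|w|^{2}N^{-\mu}\big)=\psi+V,\qquad \psi:=-\mu\log N,\quad V:=-\log(1-u),\quad u:=|w|^{2}N^{-\mu}=|w|^{2}e^{\psi},
\]
and to observe that $\psi$ is precisely a K\"ahler potential for the base metric $g^{\Omega(\mu)}=\frac{\mu}{\gamma}g_{B}$, so that $\psi_{i\bar{j}}=g^{\Omega(\mu)}_{i\bar{j}}$ and the fourth derivatives of $\psi$ encode $R_{g^{\Omega(\mu)}}$. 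Throughout I would use the standard properties of the generic norm in the Harish-Chandra realization: $N(0)=1$, $N_{i}|_{0}=0$, the vanishing of the pure holomorphic second derivatives $N_{ij}|_{0}=0$, and consequently $\psi|_{0}=0$, $\psi_{i}|_{0}=0$ and $\psi_{ik\bar{q}}|_{0}=0$. Writing $T:=1-|w|^{2}$, these give $u|_{0}=|w|^{2}$, $u_{i}|_{0}=0$, $u_{i\bar{j}}|_{0}=|w|^{2}\psi_{i\bar{j}}$, while the only surviving $w$-derivatives of $u$ at $z=0$ are $u_{w}=\bar{w}$, $u_{\bar{w}}=w$, $u_{w\bar{w}}=1$.

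First I would assemble the metric and its inverse at $z=0$. A short chain-rule computation for $V=-\log(1-u)$ shows that the mixed block $g(\mu)_{i\bar{w}}$ vanishes, while
\[
g(\mu)_{i\bar{j}}\big|_{0}=\frac{1}{T}\,g^{\Omega(\mu)}_{i\bar{j}},\qquad g(\mu)_{w\bar{w}}\big|_{0}=\frac{1}{T^{2}},
\]
so the metric is block diagonal at $(0,w)$, with inverse $g(\mu)^{i\bar{j}}|_{0}=T\,(g^{\Omega(\mu)})^{i\bar{j}}$ and $g(\mu)^{w\bar{w}}|_{0}=T^{2}$; in particular every contraction with a $z$-block inverse metric produces a factor $T$. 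I would then organize $|R_{g(\mu)}|^{2}$ according to the number of $w$-indices appearing in each component, using block-diagonality to decouple the $z$- and $w$-contractions.

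The core of the proof is the all-$z$ block. Using $R_{a\bar{b}c\bar{d}}=-\Phi_{a\bar{b}c\bar{d}}+g(\mu)^{p\bar{q}}\Phi_{ac\bar{q}}\Phi_{p\bar{b}\bar{d}}$ together with the vanishing facts above, one checks that the relevant third derivatives die at the center, $\Phi_{ik\bar{q}}|_{0}=\Phi_{ik\bar{w}}|_{0}=0$, so the quadratic correction drops out entirely and $(R_{g(\mu)})_{i\bar{j}k\bar{l}}|_{0}=-\Phi_{i\bar{j}k\bar{l}}|_{0}$. A Fa\`a di Bruno expansion of $V_{i\bar{j}k\bar{l}}$, in which only the $f'u_{i\bar{j}k\bar{l}}$ term and the $f''$ pairing $u_{i\bar{j}}u_{k\bar{l}}+u_{i\bar{l}}u_{k\bar{j}}$ survive (every single holomorphic $z$-derivative of $u$ vanishing at the origin), then yields
\[
(R_{g(\mu)})_{i\bar{j}k\bar{l}}\big|_{0}=\frac{1}{T}\,(R_{g^{\Omega(\mu)}})_{i\bar{j}k\bar{l}}-\frac{|w|^{2}}{T^{2}}\big(h_{i\bar{j}}h_{k\bar{l}}+h_{i\bar{l}}h_{k\bar{j}}\big),\qquad h:=g^{\Omega(\mu)}.
\]
Contracting with four $z$-inverse metrics (hence a factor $T^{4}$) and expanding the square, the cross term involves $\langle R_{g^{\Omega(\mu)}},\,h\otimes h+\mathrm{swap}\rangle=2k_{g^{\Omega(\mu)}}$ (two traces of the curvature producing the scalar curvature) and the last term involves $|h\otimes h+\mathrm{swap}|^{2}=2d(d+1)$. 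The powers of $T$ combine via $T+|w|^{2}=1$ to give precisely $(1-|w|^{2})^{2}|R_{g^{\Omega(\mu)}}|^{2}-4|w|^{2}(1-|w|^{2})k_{g^{\Omega(\mu)}}+2d(d+1)|w|^{4}$, the first three terms of the asserted formula.

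It remains to account for the components carrying at least one $w$-index, and this is where I expect the main difficulty. Here the third derivatives no longer vanish; the chain rule produces genuinely mixed quantities such as $\Phi_{iw\bar{q}}|_{0}=\frac{\bar{w}}{T^{2}}h_{i\bar{q}}$ and $\Phi_{ww\bar{w}}|_{0}=\frac{2\bar{w}}{T^{3}}$, and one must also compute the fourth derivatives $\Phi_{w\bar{w}k\bar{l}}$, $\Phi_{i\bar{w}k\bar{w}}$, $\Phi_{w\bar{w}w\bar{w}}$, and so on. The hard part will be to keep track of all index types allowed by the K\"ahler symmetries, to carry out the $g(\mu)^{w\bar{w}}$- and $g(\mu)^{z}$-contractions correctly, and to verify that the many $|w|$- and $T$-dependent pieces collapse to the single $z$- and $w$-independent constant $4(d+1)$. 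Adding this constant to the all-$z$ contribution completes the proof; as a consistency check, specializing to $\Omega=B^{d}$, $\mu=1$, where $R_{g^{\Omega(\mu)}}$ has constant holomorphic sectional curvature, should reproduce the known curvature norm of complex hyperbolic space.
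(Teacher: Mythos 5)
The first thing to note is that the paper itself contains no proof of this lemma: it is quoted directly from Zedda \cite{Zedda} (her Lemma 8), and your direct computation of the curvature of $\Phi=\psi+V$ at the points $(0,w)$, exploiting block-diagonality and the circularity of the Harish-Chandra realization, is essentially the computation carried out there, so methodologically you are on the intended track. Everything you actually execute is correct: the decomposition $\Phi=\psi+V$ with $\psi$ a potential for $h:=g^{\Omega(\mu)}$, the values $g(\mu)_{i\bar j}|_0=\frac{1}{T}h_{i\bar j}$, $g(\mu)_{w\bar w}|_0=\frac{1}{T^2}$, $g(\mu)_{i\bar w}|_0=0$ with $T:=1-|w|^2$, the vanishing $\Phi_{ik\bar q}|_0=\Phi_{ik\bar w}|_0=0$, the formula $(R_{g(\mu)})_{i\bar jk\bar l}|_0=\frac{1}{T}(R_h)_{i\bar jk\bar l}-\frac{|w|^2}{T^2}(h_{i\bar j}h_{k\bar l}+h_{i\bar l}h_{k\bar j})$ (the powers of $T$ collapsing via $T+|w|^2=1$), and the resulting first three terms of \eqref{equ7}, including the combinatorial values $2k_h$ and $2d(d+1)$.

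The one genuine incompleteness is the mixed block, which you announce as ``the main difficulty'' but do not carry out; it does close, and more easily than you suggest, because the same bidegree argument that killed $\Phi_{ik\bar q}|_0$ kills almost everything (any Fa\`a di Bruno block of $u$ containing an unmatched holomorphic $z$-index vanishes at $z=0$, and $u$ is linear in $\bar w$). Concretely, at $(0,w)$ one finds $R_{i\bar jk\bar w}=R_{i\bar wk\bar w}=R_{i\bar ww\bar w}=0$, while the two surviving types are
\begin{equation*}
R_{i\bar jw\bar w}\big|_0=-\frac{1}{T^3}\,h_{i\bar j},\qquad
R_{w\bar ww\bar w}\big|_0=-\frac{2}{T^4},
\end{equation*}
the first coming from $\Phi_{i\bar jw\bar w}|_0=\frac{1+|w|^2}{T^3}h_{i\bar j}$ corrected by the quadratic term $g^{p\bar q}\Phi_{iw\bar q}\Phi_{p\bar j\bar w}=\frac{|w|^2}{T^3}h_{i\bar j}$ built from your $\Phi_{iw\bar q}|_0=\frac{\bar w}{T^2}h_{i\bar q}$, the second from $\Phi_{w\bar ww\bar w}|_0=\frac{2+4|w|^2}{T^4}$ and $g^{w\bar w}|\Phi_{ww\bar w}|^2=\frac{4|w|^2}{T^4}$. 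Contracting, the four admissible placements of the $(z\bar z w\bar w)$ type each contribute $T^2\cdot T^4\cdot T^{-6}\,h^{i\bar i'}h^{j'\bar j}h_{i\bar j}h_{i'\bar j'}=d$, hence $4d$ in total, and the pure fiber component contributes $T^8\cdot 4T^{-8}=4$, giving exactly the missing constant $4(d+1)$; note it is only the delicate cancellations inside each component, not the bookkeeping of index types, that require care. With these five facts inserted your proposal becomes a complete and correct proof, and your consistency check does work: for $\Omega=B^d$, $\mu=1$ one has $|R_h|^2=2d(d+1)$, $k_h=-d(d+1)$, and \eqref{equ7} collapses to the constant $2(d+1)(d+2)$, the curvature norm of the complex hyperbolic space $B^{d+1}$.
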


\begin{lemma}[Zedda \cite{Zedda}, (39) and (40)]
For the Cartan-Hartogs domain $\Omega^{B}(\mu)$ endowed with the
K\"{a}hler metric $g(\mu)$, we have the following identities
\begin{equation}\label{equ8}
\begin{aligned}
{[\Delta k_{g(\mu)}]}_{z=0}=&-\frac{d(\mu(d+1)-\gamma)}{\mu}(1-{\vert w\vert}^{2})((d-1){\vert w\vert}^{2}+1),\\
\end{aligned}
\end{equation}
\begin{equation}\label{equ9}
\begin{aligned}
{[{\vert Ric_{g(\mu)}\vert}^{2}]}_{z=0}=&d(\frac{d(\mu(d+1)-\gamma)}{\mu})^{2}(1-{\vert w\vert}^{2})^{2}+\\
-&2d(d+2)\frac{d(\mu(d+1)-\gamma)}{\mu}(1-{\vert w\vert}^{2})+(d+1)(d+2)^{2}.\\
\end{aligned}
\end{equation}
\end{lemma}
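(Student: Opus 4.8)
The plan is to derive both identities from a single closed form for the determinant of the metric, namely $\det g(\mu)=(-1)^{d}\mu^{d}N^{(d+1)(\mu-1)}\det(N_{i\bar j})\,(N-S)\,X^{-(d+2)}$, where $X=N(z,z)^{\mu}-|w|^{2}$, the symbols $N_i,N_{i\bar j}$ denote derivatives of the generic norm, and $S=\sum_{i,j}(N^{-1})^{\bar j i}N_iN_{\bar j}$ with $(N^{-1})^{\bar j i}$ the inverse of the Hermitian matrix $(N_{i\bar j})$. First I would write $\Phi=-\log X$, record $g_{i\bar j}=-X_{i\bar j}/X+X_iX_{\bar j}/X^{2}$, $g_{i\bar w}=-wX_i/X^{2}$, $g_{w\bar w}=N^{\mu}/X^{2}$, and compute $\det g(\mu)$ by the bordered-determinant (Schur complement) identity $\det(\partial\bar\partial\Phi)=(-1)^{d+1}X^{-(d+2)}\det B$, where $B$ borders the complex Hessian of $X$ by its gradient. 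Expanding $B$ along the $w$-row and applying a Sherman--Morrison reduction to the block $(X_{i\bar j})=(\partial_i\partial_{\bar j}N^{\mu})$ collapses everything to the displayed form; the decisive bonus is that all $|w|^{2}$ terms cancel, so the only $w$-dependence of $\det g(\mu)$ sits in the factor $X^{-(d+2)}$.

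With this determinant, the Ricci form is $Ric_{a\bar b}=-\partial_a\partial_{\bar b}\log\det g(\mu)$. Since $\partial_a\partial_{\bar b}\log X=-g_{a\bar b}$ and the remaining factors depend only on $z$, this yields immediately $Ric_{w\bar w}=-(d+2)g_{w\bar w}$ and $Ric_{i\bar w}|_{z=0}=0$, so the Ricci tensor is block-diagonal at every point $(0,w)$ with fiber eigenvalue $\rho_w=-(d+2)$. For the base block I would evaluate at $z=0$ using $N(0,0)=1$, $N_i|_0=0$, $N_{ij}|_0=0$ (the generic norm has no purely holomorphic Taylor terms), and $N_{i\bar j}|_0=-\tfrac1\gamma (g_B)_{i\bar j}|_0=-\tfrac1\mu (g^{\Omega(\mu)})_{i\bar j}|_0$ from \eqref{equa666}, together with the Kähler--Einstein identity $\det g_B=c\,N^{-\gamma}$ for the bounded symmetric domain. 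The one subtle point is that the complex Hessians of $\log\det(N_{i\bar j})$ and of $\log(N-S)$ at $z=0$ must be computed keeping the mixed cross-terms $N_{i\bar l}N_{k\bar j}$ (not merely the vanishing holomorphic ones $N_{ij}$); doing so shows that each Hessian is a scalar multiple of $(g_B)_{i\bar j}|_0$. Hence $Ric_{i\bar j}|_{z=0}=\rho_z\,g_{i\bar j}|_0$ with $\rho_z=\tfrac{\mu(d+1)-\gamma}{\mu}(1-|w|^{2})-(d+2)$, and as a check this gives $k_{g(\mu)}|_{z=0}=d\rho_z+\rho_w$, in agreement with \eqref{equa6}.

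From the block-diagonal eigenvalue form $Ric|_{z=0}=\mathrm{diag}(\rho_z\,g_{\mathrm{base}},\,\rho_w\,g_{w\bar w})$ one gets $\big[|Ric_{g(\mu)}|^{2}\big]_{z=0}=d\,\rho_z^{2}+\rho_w^{2}$, and substituting $\rho_z,\rho_w$ and expanding the square produces \eqref{equ9}. For \eqref{equ8} the Ricci tensor is not even needed: since $k_{g(\mu)}$ is already known from \eqref{equa6}, I would write $k_{g(\mu)}=\mathrm{const}-\tfrac{d(\mu(d+1)-\gamma)}{\mu}\,|w|^{2}N^{-\mu}$ and compute $\Delta k_{g(\mu)}=g^{a\bar b}\partial_a\partial_{\bar b}k_{g(\mu)}$ directly. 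At $z=0$ the inverse metric is block diagonal with $g^{i\bar j}|_0=(1-|w|^{2})(g^{\Omega(\mu)})^{i\bar j}|_0$ and $g^{w\bar w}|_0=(1-|w|^{2})^{2}$, while the Hessian of $|w|^{2}N^{-\mu}$ equals $|w|^{2}(g^{\Omega(\mu)})_{i\bar j}|_0$ in the base block and $1$ in the fiber; contracting gives a base contribution $d\,|w|^{2}(1-|w|^{2})$ and a fiber contribution $(1-|w|^{2})^{2}$, whose sum factors as $(1-|w|^{2})((d-1)|w|^{2}+1)$, which is \eqref{equ8}.

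The step I expect to be the main obstacle is the base-direction Hessian of $\log\det g(\mu)$ at $z=0$: a priori it could involve the third- and fourth-order jet of $N$, hence the full curvature tensor $R_{g_B}$ of $\Omega$, and the crux is to show these contributions collapse to a scalar multiple of $(g_B)_{i\bar j}|_0$. The Sherman--Morrison manipulation behind the determinant formula and the Einstein identity $\det g_B=c\,N^{-\gamma}$ are precisely what force this collapse. A convenient cross-check, and an alternative route that bypasses the explicit Hessian altogether, is to observe that the isotropy group of the irreducible domain $\Omega$ at $0$ acts irreducibly on $T_0\Omega$, so by Schur's lemma $Ric_{i\bar j}|_{z=0}$ is automatically proportional to $(g^{\Omega(\mu)})_{i\bar j}|_0$; then $\rho_z$ is pinned down using only \eqref{equa6} together with $\rho_w=-(d+2)$, after which \eqref{equ9} follows as above.
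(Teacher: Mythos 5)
Your argument is sound and each step checks out. The Schur-complement evaluation of the bordered determinant does give $\det g(\mu)=(-1)^{d}\mu^{d}N^{(d+1)(\mu-1)}\det(N_{i\bar j})(N-S)X^{-(d+2)}$, and the point you flag as the main obstacle in fact dissolves entirely: applying the same bordered-determinant identity to $-\partial\bar\partial\log N=\tfrac1\gamma g_{B}$ gives $(-1)^{d}\det(N_{i\bar j})(N-S)=N^{d+1}\det(g_{B}/\gamma)$, which together with the Einstein identity $\det g_{B}=c\,N^{-\gamma}$ yields the \emph{global} formula $\log\det g(\mu)=\mathrm{const}+(\mu(d+1)-\gamma)\log N-(d+2)\log X$. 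Hence $Ric_{g(\mu)}=\frac{\mu(d+1)-\gamma}{\gamma}\,g_{B}-(d+2)\,g(\mu)$ everywhere (with $g_{B}$ pulled back from the base), no third- or fourth-order jets of $N$ ever enter, and your separate treatment of the Hessians of $\log\det(N_{i\bar j})$ and $\log(N-S)$ --- as well as the Schur-lemma fallback --- is unnecessary, though both are valid. Evaluating at $z=0$ gives exactly your $\rho_{z}$, $\rho_{w}$, and your contraction reproduces \eqref{equ8} verbatim, including the factorization $(1-|w|^{2})((d-1)|w|^{2}+1)$. Be aware that the paper itself contains no proof of this lemma --- it is quoted from Zedda \cite{Zedda}, (39)--(40) --- and the cited derivation is the same determinant-plus-Einstein computation, so your route coincides with the source rather than diverging from it.

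One concrete discrepancy you should not pass over silently: expanding $d\rho_{z}^{2}+\rho_{w}^{2}$ with $c=\frac{\mu(d+1)-\gamma}{\mu}$ gives $[{\vert Ric_{g(\mu)}\vert}^{2}]_{z=0}=d\,c^{2}(1-|w|^{2})^{2}-2d(d+2)\,c\,(1-|w|^{2})+(d+1)(d+2)^{2}$, whereas \eqref{equ9} as printed carries $\frac{d(\mu(d+1)-\gamma)}{\mu}=dc$ in both $w$-dependent terms, i.e.\ an extra factor $d$ (and $d^{2}$ in the quadratic term). The printed version is a misprint: it is inconsistent with the paper's own use of the lemma in \eqref{33}, which expands to precisely your expression, and with the formula's role in the subsequent Proposition. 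So your assertion that substituting $\rho_{z},\rho_{w}$ ``produces \eqref{equ9}'' is true only for the corrected formula; your derivation in fact exposes the typo, which is a point in its favor rather than a gap.
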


\section{The proof of the main theorem}
In this section, we will give the proof of the main theorem.
Firstly, By (\ref{eauq3}), we have
\begin{equation} \label{30}
a_{2}(z,w)=\frac{1}{3}\Delta k_{g(\mu)}+\frac{1}{24}{\vert
R_{g(\mu)}\vert}^{2}-\frac{1}{6} {\vert
Ric_{g(\mu)}\vert}^{2}+\frac{1}{8}k_{g(\mu)}^{2}.
\end{equation}
For convenience, denote
\begin{equation} \label{300}
c:=\frac{\mu(d+1)-\gamma}{\mu}.
\end{equation}

If $a_{2}(z,w)$ is a constant, then $a_{2}(0,w)$ is also a constant
on $|w|<1$. Then by $(\ref{equa6})$, $(\ref{equ7})$, $(\ref{equ8})$
and $(\ref{equ9})$, after a straightforward computation, we
 have the following result.

 \begin{proposition} Let $\Omega$ be an irreducible bounded symmetric domain of
$\mathbb{C}^{d}$ of genus $\gamma$. Assume that
$(\Omega^{B}(\mu),g(\mu))$ is a Cartan-Hartogs domain endowed with
the canonical metric $g(\mu)$. If the coefficient $a_2$ of the
Rawnsley's $\varepsilon$-function expansion for the Cartan-Hartogs
domain $(\Omega^{B}(\mu), g(\mu))$ is constant on $\Omega^{B}(\mu)$,
then we have
\begin{equation}\label{equa17}
 \mu=\frac{\gamma}{d+1}, \;\;\; [{\vert
 R_{g_{B}}\vert}^{2}]_{z=0}=\frac{2d}{d+1},
\end{equation}
where $g_{B}$ is the Bergman metric of $\Omega$.
\end{proposition}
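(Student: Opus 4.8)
The plan is to restrict attention to the slice $z=0$ and use the hypothesis that $a_2(z,w)$ is constant, which in particular forces $a_2(0,w)$ to be constant on the disk $|w|<1$. First I would substitute the four evaluated quantities (\ref{equa6}), (\ref{equ7}), (\ref{equ8}) and (\ref{equ9}) into (\ref{30}), writing everything in the single real variable $s:=|w|^2$. Since $g^{\Omega(\mu)}=\frac{\mu}{\gamma}g_{B}$ is a constant rescaling of the Bergman metric, I would use the standard scaling laws for K\"{a}hler invariants together with the Einstein normalization of the Bergman metric (so that $k_{g_{B}}=-d$) to record, with $c=(d+1)-\frac{\gamma}{\mu}$ as in (\ref{300}),
\begin{equation*}
k_{g^{\Omega(\mu)}}=\tfrac{\gamma}{\mu}\,k_{g_{B}}=dc-d(d+1),\qquad [\vert R_{g^{\Omega(\mu)}}\vert^{2}]_{z=0}=\tfrac{\gamma^{2}}{\mu^{2}}\,[\vert R_{g_{B}}\vert^{2}]_{z=0}.
\end{equation*}
The key structural observation is that, at $z=0$, each of $\Delta k_{g(\mu)}$, $\vert R_{g(\mu)}\vert^{2}$, $\vert Ric_{g(\mu)}\vert^{2}$ and $k_{g(\mu)}^{2}$ is a polynomial of degree $2$ in $s$, so $a_2(0,w)$ is a quadratic in $s$ whose coefficients are explicit functions of the two unknowns $c$ (equivalently $\mu$) and $\rho:=[\vert R_{g_{B}}\vert^{2}]_{z=0}$.

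Next I would impose constancy by demanding that the coefficients of $s$ and of $s^{2}$ both vanish, obtaining two polynomial equations. Writing $K:=dc$ and $A:=[\vert R_{g^{\Omega(\mu)}}\vert^{2}]_{z=0}$, the $s^{2}$-equation solves linearly for $A$ as a quadratic in $K$, of the form $A=(4d-3)K^{2}+(4-8d)K+2d(d+1)$. The heart of the argument is to substitute this expression back into the $s$-equation: I expect the $K^{2}$-terms and the $K$-free terms to cancel identically, collapsing the $s$-equation to a single linear relation $(-2d^{2}+10d+16)\,K=0$. Since the arithmetic coefficient $-2d^{2}+10d+16$ is nonzero for every positive integer $d$ (its roots are irrational), this forces $K=0$, i.e. $c=0$, which is precisely $\mu=\frac{\gamma}{d+1}$.

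Finally, with $c=0$ I would read off $A=2d(d+1)$ from the quadratic above, and then invert the scaling law: since $\frac{\gamma}{\mu}=d+1$ when $c=0$, we get $A=(d+1)^{2}\rho$, whence $\rho=[\vert R_{g_{B}}\vert^{2}]_{z=0}=\frac{2d(d+1)}{(d+1)^{2}}=\frac{2d}{d+1}$, as claimed. I anticipate the main obstacle to be computational discipline rather than any conceptual difficulty: one must correctly propagate the metric-rescaling factors into $A$ and $k_{g^{\Omega(\mu)}}$ and then carry out the cancellation in the $s$-equation without error, since it is exactly this cancellation that eliminates the nonlinear dependence on $c$ and leaves the linear equation that pins down $\mu$. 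A secondary point worth checking explicitly is that $-2d^{2}+10d+16$ never vanishes on the positive integers, as this is what rules out any exotic solution and guarantees $c=0$.
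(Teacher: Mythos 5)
Your proposal is correct and is essentially the paper's own argument: restrict to $z=0$, expand $a_2(0,w)$ as a quadratic in $s=|w|^2$, eliminate the unknown $[\vert R_{g_B}\vert^2]_{z=0}$ between the two vanishing coefficients (your substitution of the $s^2$-equation into the $s$-equation is precisely the paper's combination $2c_0+c_1=0$), deduce $c=0$ because the remaining coefficient, a polynomial in $d$, has no positive integer root, and then solve the $s^2$-equation for $[\vert R_{g_B}\vert^2]_{z=0}=\frac{2d}{d+1}$. Incidentally, your linear relation $(-2d^{2}+10d+16)K=0$ (with $K=dc$) is the correct one, whereas the paper's printed equation (equivalent to $\frac14 d(d+3)c=0$) results from transcription slips in its $c_0$ and $c_1$ (factors of $d$ are dropped in the $\vert Ric_{g(\mu)}\vert^2$ contribution and a $c^2$-term goes missing); since neither polynomial has a positive integer root, the conclusion $c=0$, and hence the proposition, is unaffected.
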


\begin{proof}
Firstly, by $(\ref{equ8})$, we have
\begin{equation} \label{31}
\begin{aligned}
\frac{1}{3}[\Delta k_{g(\mu)}]_{z=0}=&\frac{1}{3}d\cdot c(d-1){\vert w\vert}^{4}-\frac{1}{3}d\cdot c(d-2){\vert w\vert}^{2}-\frac{1}{3}dc.\\
\end{aligned}
\end{equation}
Since ${\vert R_{g^{\Omega(\mu)}}\vert}^{2}={\vert
R_{g_{B}}\vert}^{2}\frac{\gamma^2}{\mu^2}$ by the definition of
$g^{\Omega(\mu)}$ (see \eqref{equa666}), from $(\ref{equ7})$, we get
\begin{equation}\label{32}
\begin{aligned}
\frac{1}{24}[{\vert R_{g(\mu)}\vert}^{2}]_{z=0}=&\frac{1}{24}[\frac{\gamma^{2}}{\mu^{2}}{\vert R_{g_{B}}\vert}^{2}-4d\frac{\gamma}{\mu}+2d(d+1)]{\vert w\vert}^{4}
+\frac{1}{24}[-2\frac{\gamma^{2}}{\mu^{2}}{\vert R_{g_{B}}\vert}^{2}\\
+&4d\frac{\gamma}{\mu}]{\vert w\vert}^{2}
+\frac{1}{24}[\frac{\gamma^{2}}{\mu^{2}}{\vert R_{g_{B}}\vert}^{2}+4(d+1)].
\end{aligned}
\end{equation}
Similarly, from $(\ref{equ9})$, one rewrite the $-\frac{1}{6}[{\vert
Ric_{g(\mu)}\vert}^{2}]_{z=0}$ in ${\vert w\vert}^{4}$ and ${\vert
w\vert}^{2}$ as follows
\begin{equation}\label{33}
\begin{aligned}
-\frac{1}{6}[{\vert Ric_{g(\mu)}\vert}^{2}]_{z=0}=&-\frac{1}{6}d\cdot c^{2}{\vert w\vert}^{4}-\frac{1}{6}[2d(d+2)c-2dc^{2}]{\vert w\vert}^{2}+\\
-&\frac{1}{6}[dc^{2}+(d+1)(d+2)^{2}-2d(d+2)c].
\end{aligned}
\end{equation}
Lastly, from $(\ref{equa6})$, we have
\begin{equation} \label{34}
\begin{aligned}
\frac{1}{8}[k_{g(\mu)}^{2}]_{z=0}=&\frac{1}{8}d^{2}c^{2}{\vert w\vert}^{4}+\frac{1}{8}[2d(d+1)(d+2)c-2d^{2}c^{2}]{\vert w\vert}^{2}
+\frac{1}{8}[d^{2}c^{2}\\
+&(d+2)(d+1)^{2}].
\end{aligned}
\end{equation}

Combining \eqref{30},  \eqref{31}, \eqref{32}, \eqref{33} and
\eqref{34}, we have
$$[a_{2}(z,w)]_{z=0}=c_{0}{\vert w\vert}^{4}+c_{1}{\vert w\vert}^{2}+c_{2},$$
where
\begin{equation}
\begin{aligned}
c_{0}:=&\frac{1}{3}dc(d-1)+\frac{1}{24}\frac{\gamma^{2}}{\mu^{2}}{\vert R_{g_{B}}\vert}^{2}+\frac{1}{12}d(d+1)-\frac{1}{6}d\frac{\gamma}{\mu}\\
-&\frac{1}{6}dc^{2}+\frac{1}{8}d^{2}c^{2},\\
\end{aligned}
\end{equation}
\begin{equation}
\begin{aligned}
c_{1}:=&-\frac{1}{3}dc(d-2)-\frac{1}{12}\frac{\gamma^{2}}{\mu^{2}}{\vert R_{g_{B}}\vert}^{2}+\frac{1}{6}d\frac{\gamma}{\mu}-\frac{1}{3}cd(d+2)\\
+&\frac{1}{4}d(d+1)(d+2)c-\frac{1}{4}d^{2}c^{2}.\\
\end{aligned}
\end{equation}

Since $a_{2}(z,w)$ is a constant,  we get that $a_{2}(0,w)$ is a
constant, and thus $c_{0}=c_{1}=0$. Hence, from $2c_{0}+c_{1}=0$, we
have
\begin{equation*}
\frac{2}{3}dc(d-1)+\frac{1}{6}dc=\frac{2}{3}d^{2}c-\frac{1}{4}dc(d+1)(d+2),
\end{equation*}
in which we use the fact $\frac{\gamma}{\mu}=(d+1)-c$ (see
\eqref{300}). If $c\neq 0$, then we have
$$\frac{2}{3}(d-1)+\frac{1}{6}=
\frac{2}{3}d-\frac{1}{4}(d+1)(d+2).$$ Thus $d=0$ or $d=-3$, which is
impossible. Therefore,  we have $c=0,$ and furthermore, from
\eqref{300}, we have
\begin{equation}\label{39}
\mu=\frac{\gamma}{d+1}. \end{equation} By putting  $c=0$ and
$\;\frac{\gamma}{\mu}=d+1$ into $c_0=0$, we get
\begin{equation*}
\frac{1}{24}(d+1)^{2}{\vert
R_{g_{B}}\vert}^{2}-\frac{1}{12}d(d+1)=0.
\end{equation*}
That is
\begin{equation}\label{40}
[{\vert R_{g_{B}}\vert}^{2}]_{z=0}= \frac{2d}{d+1}.
\end{equation}
This proves the proposition.
\end{proof}

Now we will use  \eqref{equa17} to determine the Cartan-Hartogs
domain $(\Omega^{B}(\mu),g(\mu))$.

\noindent{\bf{Case 1.}} For
$\Omega=D_{m,n}^{\uppercase\expandafter{\romannumeral1}}:=\{z\in
M_{m\times n}: I-z\bar{z}^{t}>0\}$ $(1\leq m \leq n)$, we have
$${[{\vert
R_{g_{B}}\vert}^{2}]}_{z=0}=\frac{2mn(mn+1)}{(m+n)^{2}}.$$ By
(\ref{40}) (note $d=mn$ and $\gamma=n+m$  in this case), we get
$(mn+1)\frac{2mn(mn+1)}{(m+n)^{2}}=2mn,$ which implies $ m=1$ or
$n=1.$ So we get $m=1$. Then $\gamma=n+1$,  and  by (\ref{39}),
$\mu=1$. Hence the Cartan-Hartogs domain is the complex hyperbolic
space.

\noindent{\bf{Case 2.}} For
$\Omega=D_{n}^{\uppercase\expandafter{\romannumeral2}}:=\{z\in
M_{n,n}: \;z^{t}=-z,I-z\bar{z}^{t}>0\}\;\;(n\geq 4),$  we have
$${[{\vert R_{g_{B}}\vert}^{2}]}_{z=0}=\frac{n(n+1)(n^{2}-5n+12)-16n}{4(n-1)^{2}}.$$
By (\ref{40}) (note $d=n(n-1)/2$ in this case), we have $
n^5-5n^4+5n^3+5n^2-6n=0,$ which has no positive integer
solution for $n\geq 4$ (note that
$n^5-5n^4+5n^3+5n^2-6n=n(n-1)(n-2)(n+1)(n-3)$ has
no positive integer zero for $n\geq 4$).

\noindent{\bf{Case 3.}} For
$\Omega=D_{n}^{\uppercase\expandafter{\romannumeral3}}:=\{z\in
M_{n,n}: \; z^{t}=z, I-z\bar{z}^{t}>0\}(n\geq 2),$ we have
$${[{\vert R_{g_{B}}\vert}^{2}]}_{z=0}=\frac{n(n+1)(n^2+19n-60)+96n}{4(n+1)^{2}}.$$
By (\ref{40}) (note $d=n(n+1)/2$ in this case), we have
$n^{5}+21n^4-27n^3+11n^2-70n+64=0,$ which has no positive integer
solution for $n\geq 2$ (note that
$n^{5}+21n^4-27n^3+11n^2-70n+64=(n-1)(n^4+22n^3-5n^2+6n-64)$ has
no positive integer zero for $n\geq 2$).

\noindent{\bf{Case 4.}} For
$\Omega=D_{n}^{\uppercase\expandafter{\romannumeral4}}:=\{z\in\mathbb{C}^{n}:1-2z\bar{z}^{t}+{\vert
zz^{t}\vert}^{2}>0 ,z\bar{z}^{t}<1\}\; (n\geq5),$ we have
$${[\vert
R_{g_{B}}\vert}^{2}]_{z=0}=\frac{3n-2}{n}.$$ By (\ref{40}), we get
$n^{2}+n-2=0,$ which has no positive integer solution for $n\geq 5$.

\noindent{\bf{Cases 5 and 6.}} For an irreducible bounded symmetric
domain $\Omega$, we have that ${\vert
[R_{{g_{B}}}]_{\alpha\overline{\beta}\upsilon\overline{\delta}}(0)\vert}^{2}$
is an integer with respect to $(\Omega, g_{B})$ and
$${[{\vert R_{g_{B}}\vert}^{2}]}_{z=0}=\frac{1}{\gamma^{4}}\sum\limits_{\alpha,\beta,\upsilon,\delta}
{\vert
[R_{g_{B}}]_{\alpha\overline{\beta}\upsilon\overline{\delta}}(0)\vert}^{2}.$$

For $\Omega=D^{\uppercase\expandafter{\romannumeral5}}(16)=E_{6}/
Spin(10) \times T^{1}$ (in this case, $d=16$ and $\gamma=12$), by
(\ref{40}), we have
$${[{\vert R_{g_{B}}\vert}^{2}]}_{z=0}=\frac{32}{17}.$$
So
$\frac{32}{17}\gamma^{4}=\sum\limits_{\alpha,\beta,\upsilon,\delta}{\vert
[R_{g_{B}}]_{\alpha\overline{\beta}\upsilon\overline{\delta}}(0)\vert}^{2}$
is an integer, which is impossible for $\gamma=12$.

For $\Omega=D^{\uppercase\expandafter{\romannumeral6}}(27)=E_{7}/
E_{6} \times T^{1}$ (in this case, $d=27$ and $\gamma=18$), by
(\ref{40}), we have
$${[{\vert
R_{g_{B}}\vert}^{2}]}_{z=0}=\frac{27}{14}.$$ So
$\frac{27}{14}\gamma^{4}=\sum\limits_{\alpha,\beta,\upsilon,\delta}{\vert
[R_{g_{B}}]_{\alpha\overline{\beta}\upsilon\overline{\delta}}(0)\vert}^{2}$
is an integer, which is impossible for $\gamma=18$.

Combing the above results, we get that if $a_{2}$ is a constant,
then the Cartan-Hartogs domain is the complex hyperbolic space.

Since the complex hyperbolic space is the unit ball equipped with
the hyperbolic metric, we have that $a_{2}(z,w)$ is a constant for
the complex hyperbolic space. So we have proved the main theorem.

\section{Appendix}

For completeness, we will give ${[{\vert
R_{g_{B}}\vert}^{2}]}_{z=0}$ for a classical symmetric domain
$\Omega$ with the Bergman metric $g_{B}$ and prove  ${\vert
[R_{{g_{B}}}]_{\alpha\overline{\beta}\upsilon\overline{\delta}}(0)\vert}^{2}$
is an integer with respect to $(\Omega, g_{B})$
 for an irreducible bounded symmetric domain $\Omega$.  In fact, they can be found in some standard literatures (e.g.,
Helgason \cite{hel} and Mok \cite{Mok}).

{\bf I}. Here, we will give ${[{\vert R_{g_{B}}\vert}^{2}]}_{z=0}$
for a classical symmetric domain $\Omega$ with the Bergman metric
$g_{B}.$ By definition (see (13) in \cite{Zedda} for reference), we
have
$${{\vert
R_{g_{B}}\vert}^{2}}=\sum\limits_{\alpha,\beta,\eta,\theta,\zeta,\nu,\xi,\tau}\overline{g_B^{\alpha\bar{\zeta}}}
g_B^{\beta\bar{\nu}}\overline{g_B^{\eta\bar{\xi}}}g_B^{\theta\bar{\tau}}R_{\alpha\bar{\beta}\eta\bar{\theta}}\overline{R_{\zeta
\bar{\nu}\xi\bar{\tau}}}.$$ The curvature tensor $R_{g_{B}}$ of
$(\Omega, g_B)$ at $0$ can be found in section 2 in Calabi
\cite{Calabi}.

\noindent{\bf{Case 1.}} for
$\Omega=D_{m,n}^{\uppercase\expandafter{\romannumeral1}}:=\{z\in
M_{m\times n}:I-z\bar{z}^{t}>0\}$ (here $\gamma=m+n$, $d=mn$). Furthermore, we can give the following identity
\begin{equation*}
\begin{aligned}
\log K(z,z)=&\log\frac{1}{V(\Omega)}\textrm{det}(I-z\bar{z}^{t})^{-(n+m)}\\
=&\log{\frac{1}{V(\Omega)}}+(m+n)\sum \limits_{\alpha,\beta}{\vert z_{\alpha\beta}\vert}^{2}+\frac{m+n}{2}\sum\limits_{\alpha,\beta,\upsilon,\lambda} \bar{z}_{\alpha\upsilon}z_{\alpha\lambda}\bar{z}_{\beta\lambda}z_{\beta\upsilon}\\
&+\textrm{higher order terms}.
\end{aligned}
\end{equation*}
 Therefore we get $[g_{B}]_{\alpha\beta,\overline{\lambda\sigma}}(0)=(m+n)\delta^{\alpha}_{\lambda}\delta^{\beta}_{\sigma}$. Moreover, we have $
[R_{g_{B}}]_{\alpha\upsilon,\overline{\beta\rho},\lambda\sigma,\overline{\mu\tau}}(0)=-(m+n)(\delta^{\alpha}_{\beta}
\delta^{\lambda}_{\mu}\delta^{\upsilon}_{\tau}\delta^{\rho}_{\sigma}+\delta^{\alpha}_{\mu}
\delta^{\beta}_{\lambda}\delta^{\upsilon}_{\rho}\delta^{\sigma}_{\tau}).
$
Hence the following identity is established
\begin{equation*}
\begin{aligned}
{[{\vert
R_{g_{B}}\vert}^{2}]}_{z=0}=&\sum\limits_{\alpha,\beta,\lambda,\mu=1}^{m}\sum\limits_{\upsilon,\rho,\sigma,\tau=1}^{n}
\frac{1}{(m+n)^{2}}(\delta^{\alpha}_{\beta}
\delta^{\lambda}_{\mu}\delta^{\upsilon}_{\tau}\delta^{\rho}_{\sigma}+\delta^{\alpha}_{\mu}
\delta^{\beta}_{\lambda}\delta^{\upsilon}_{\rho}\delta^{\sigma}_{\tau})^{2}.\\
=&\frac{2mn(mn+1)}{(m+n)^{2}}
\end{aligned}
\end{equation*}

\noindent{\bf{Case 2.}} For
$\Omega=D_{n}^{\uppercase\expandafter{\romannumeral2}}:=\{z\in
M_{n,n},z^{t}=-z,I-z\bar{z}^{t}>0\}$ $(n\geq 4)$ $(\textrm{here }\gamma=2(n-1),d=\frac{1}{2}n(n-1))$. Similar to case 1, we have
\begin{equation*}
\begin{aligned}
\log K(z,z)=&\log \frac{1}{V(\Omega)}\textrm{det}(I-z\bar{z}^{t})^{-(n-1)}\\
=&-\log{V(\Omega)}+(n-1)\sum \limits_{\alpha<\beta}2{\vert z_{\alpha\beta}\vert}^{2}+\frac{n-1}{2}\sum\limits_{\alpha,\beta,\upsilon,\lambda} \bar{z}_{\alpha\upsilon}z_{\alpha\lambda}\bar{z}_{\beta\lambda}z_{\beta\upsilon}\\
&+\textrm{higher order terms}.
\end{aligned}
\end{equation*}
Hence we get $[g_{B}]_{\alpha\beta,\overline{\lambda\sigma}}(0)=2(n-1)\delta^{\alpha}_{\lambda}\delta^{\beta}
_{\sigma}(\alpha<\beta,\;\lambda<\sigma)$ and similar to (3.9) in Calabi
\cite{Calabi},
we have $$
[R_{g_{B}}]_{\alpha\upsilon,\overline{\beta\rho},\lambda\sigma,\overline{\mu\tau}}(0)=2(n-1)(-\delta^{\beta\rho}_
{\alpha\sigma}\delta^{\mu\tau}_
{\lambda\upsilon}-\delta^{\mu\tau}_
{\beta\sigma}\delta^{\beta\rho}_
{\lambda\upsilon}+\delta^{\beta\rho}_
{\alpha\lambda}\delta^{\mu\tau}_
{\sigma\upsilon}+\delta^{\mu\tau}_
{\alpha\lambda}\delta^{\beta\rho}_
{\sigma\upsilon}). $$
Where the precisely definition of $\delta^{\alpha\beta}_
{\rho\sigma}(=\frac{\partial z_{\alpha\beta}}{\partial z_{\rho\sigma}}=\delta^{\alpha}_{\rho}\delta^{\beta}_{\sigma}-\delta^{\alpha}_{\sigma}\delta^{\beta}_{\rho})$ can be found in Calabi
\cite{Calabi}. Here we must note that $z_{\alpha\beta}=-z_{\beta\alpha}$ and $\rho<\sigma$.
Hence after a long computation yields the following result
\begin{equation*}
\begin{aligned}
{[{\vert R_{g_{B}}\vert}^{2}]}_{z=0}=&\sum\limits_{\alpha<\upsilon\;\beta<\rho\;\lambda<\sigma\;\mu<\tau}
g_{B}^{\overline{\alpha\upsilon,\overline{\alpha\upsilon}}}g_{B}^{\beta\rho,\overline{\beta\rho}}
g_{B}^{\overline{\lambda\sigma,\overline{\lambda\sigma}}}g_{B}^{\mu\tau,\overline{\mu\tau}}[R_{g_{B}}]
_{\alpha\upsilon,\overline{\beta\rho},\lambda\sigma,\overline{\mu\tau}}\overline{[R_{g_{B}}]
_{\alpha\upsilon,\overline{\beta\rho},\lambda\sigma,\overline{\mu\tau}}}\\
=&\frac{n(n+1)(n^{2}-5n+12)-16n}{4(n-1)^{2}}.\\
\end{aligned}
\end{equation*}
Therefore, we have
\begin{equation*}
\begin{aligned}
{[{\vert R_{g_{B}}\vert}^{2}]}_{z=0}=\frac{n(n+1)(n^{2}-5n+12)-16n}{4(n-1)^{2}}&.\\
\end{aligned}
\end{equation*}

\noindent{\bf{Case 3.}} For
$\Omega=D_{n}^{\uppercase\expandafter{\romannumeral3}}:=\{z\in
M_{n,n},z^{t}=z,I-z\bar{z}^{t}>0\}\;(n\geq 2)$ $(\textrm{here } \gamma=n+1, d=\frac{1}{2}n(n+1))$. Similarly, the $\log K(z,z)$ is given by
 \begin{equation*}
 \begin{aligned}
\log K(z,z)=&\log \frac{1}{V(\Omega)}\textrm{det}(I-z\bar{z}^{t})^{-(n+1)}\\
=&-\log{V(\Omega)}+(n+1)\sum \limits_{\alpha<\beta}2{\vert z_{\alpha\beta}\vert}^{2}+(n+1)\sum \limits_{\alpha=\beta}{\vert z_{\alpha\beta}\vert}^{2}\\
&+\frac{n+1}{2}\sum\limits_{\alpha,\beta,\lambda,\upsilon} \bar{z}_{\alpha\upsilon}z_{\alpha\lambda}\bar{z}_{\beta\lambda}z_{\beta\upsilon}+\textrm{higher order terms}.\\
\end{aligned}
\end{equation*}
Hence we have
\begin{equation*}
[g_{B}]_{\alpha\beta,\overline{\lambda\sigma}}(0)=
\begin{cases}
2(n+1)\delta^{\alpha}_{\lambda}\delta^{\beta}_{\sigma},&\alpha<\beta,\;\lambda<\sigma\\
(n+1)\delta^{\alpha}_{\lambda},&\alpha=\beta,\;\lambda=\sigma\\
\end{cases}
\end{equation*}

Similar to (3.12) in Calabi
\cite{Calabi}, we have $[R_{g_{B}}]_{\alpha\upsilon,\overline{\beta\rho},\lambda\sigma,\overline{\mu\tau}}(0)$ equals
\begin{equation*}
\begin{cases}-2(n+1)(e^{\beta\rho}_
{\alpha\sigma}e^{\mu\tau}_
{\lambda\upsilon}+e^{\beta\rho}_
{\lambda\upsilon}e^{\mu\tau}_
{\alpha\sigma}+e^{\beta\rho}_
{\alpha\lambda}e^{\mu\tau}_
{\sigma\upsilon}+e^{\beta\rho}_
{\sigma\upsilon}e^{\mu\tau}_
{\alpha\lambda}),&\alpha<\upsilon, \beta<\rho, \lambda<\sigma, \mu<\tau\\
-2(n+1)(e^{\alpha\upsilon}_
{\beta\tau}e^{\lambda\sigma}_
{\mu\beta}+e^{\alpha\upsilon}_
{\mu\beta}e^{\lambda\sigma}_
{\beta\tau}),&\alpha<\upsilon,\beta=\rho,\lambda<\sigma,\mu<\tau\\
-2(n+1)\delta^{\beta}_{\alpha}(e^{\mu\tau}_
{\alpha\lambda}+e^{\mu\tau}_
{\sigma\alpha}),&\alpha=\upsilon,\beta=\rho,\lambda<\sigma,\mu<\tau\\
-(n+1)(e^{\alpha\upsilon}_
{\beta\mu}e^{\lambda\sigma}_
{\mu\beta}+e^{\alpha\upsilon}_
{\mu\beta}e^{\lambda\sigma}_
{\beta\mu}),&\alpha<\upsilon,\beta=\rho,\lambda<\sigma,\mu=\tau\\
0,&\alpha<\upsilon, \beta=\rho,\mu=\tau,\lambda=\sigma\\
-2(n+1),&\alpha=\upsilon,\beta=\rho,\mu=\tau,\lambda=\sigma\\
\end{cases}
\end{equation*}
Where the exact description of $e^{\alpha\beta}_
{\rho\sigma}(=\frac{\partial z_{\alpha\beta}}{\partial z_{\rho\sigma}})$ can also be consulted in Calabi
\cite{Calabi}. Here $z_{\alpha\beta}=z_{\beta\alpha}$ and $\rho\leq\sigma$.
Hence after a complicated computation, we have
\begin{align*}
{[{\vert
R_{g_{B}}\vert}^{2}]}_{z=0}=&\sum\limits_{\alpha\leq\upsilon\;\beta\leq\rho\;\lambda\leq\sigma\;\mu\leq\tau}
g_{B}^{\overline{\alpha\upsilon,\overline{\alpha\upsilon}}}g_{B}^{\beta\rho,\overline{\beta\rho}}
g_{B}^{\overline{\lambda\sigma,\overline{\lambda\sigma}}}g_{B}^{\mu\tau,\overline{\mu\tau}}[R_{g_{B}}]
_{\alpha\upsilon,\overline{\beta\rho},\lambda\sigma,\overline{\mu\tau}}\overline{[R_{g_{B}}]
_{\alpha\upsilon,\overline{\beta\rho},\lambda\sigma,\overline{\mu\tau}}}\\
=&\frac{n(n+1)(n^2+19n-60)+96n}{4(n+1)^{2}}.\\
\end{align*}
\noindent{\bf{Case 4.}} For
$\Omega=D_{n}^{\uppercase\expandafter{\romannumeral4}}:=\{z\in\mathbb{C}^{n}:1-2z\bar{z}^{t}+{\vert
zz^{t}\vert}^{2}>0 ,z\bar{z}^{t}<1\} (n\geq5)$ $(\textrm{here } \gamma=n, d=n)$. Moreover, $\log K(z,z)$ can be expressed by
\begin{align*}
\log K(z,z)=&\log\frac{1}{V(\Omega)}(1-2z\bar{z}^{t}+{\vert zz^{t}\vert}^{2})^{-n}\\
=&-\log{V(\Omega)}+2n\sum\limits_{i}{\vert z_{i}\vert}^{2}-n{\vert \sum\limits_{i}{z_{i}^{2}}\vert}^{2}+2n(\sum\limits_{i}{\vert z_{i}\vert}^{2})^{2}+\textrm{higher order terms}.
\end{align*}
Hence we have $[g_{B}]_{\alpha\beta}(0)=2n\delta^{\alpha}_{\beta}$ and
$[R_{g_{B}}]_{\alpha\bar{\rho}\beta\bar{\sigma}}(0)=-4n(\delta^{\alpha}_{\rho}\delta^{\beta}_{\sigma}+\delta^{\alpha}_{
\sigma}
\delta^{\beta}_{\rho}-\delta^{\alpha}_{\beta}\delta^{\rho}_{\sigma}).$ Hence
$$
{[{\vert
R_{g_{B}}\vert}^{2}]}_{z=0}=\frac{1}{16n^4}\sum\limits_{\alpha,\beta,\rho,\sigma}
16n^2(\delta^{\alpha}_{\rho}\delta^{\beta}_{\sigma}+\delta^{\alpha}_{
\sigma}
\delta^{\beta}_{\rho}-\delta^{\alpha}_{\beta}\delta^{\rho}_{\sigma})^{2}
=\frac{3n-2}{n}.$$

{\bf II}. Here, for an irreducible bounded symmetric domain
$\Omega$, we will prove that ${\vert
[R_{{g_{B}}}]_{\alpha\overline{\beta}\upsilon\overline{\delta}}(0)\vert}^{2}$
is an integer with respect to $(\Omega, g_{B})$ and
$${[{\vert R_{g_{B}}\vert}^{2}]}_{z=0}=\frac{1}{\gamma^{4}}\sum\limits_{\alpha,\beta,\upsilon,\delta}
{\vert
[R_{g_{B}}]_{\alpha\overline{\beta}\upsilon\overline{\delta}}(0)\vert}^{2}.$$

 Firstly, we will express the curvature tensor in terms of Lie brackets of root vectors. All the following conventions can be found in Siu \cite{Siu2}, Borel \cite{borel} and the book Helgason \cite{hel}. So we will not explain it explicitly. And we will just compute the curvature tensor of the irreducible compact Hermitian symmetric manifold $G/K$ which will not affect our conclusions.

  Let $\Psi$ denote the set of nonzero roots of ${\mathfrak{g}}_{c}$ with respect to ${\mathfrak{t}}_{c}$. Write $\Delta$ to denote that of nonzero noncompact roots and $\Delta^{+}$ that of all positive noncompact roots. Moreover, there exists a set $\Lambda$ of strongly orthogonal noncompact positive roots.  For every $\alpha\in \Delta^{+}$, let $e_{\alpha}$ denote the root vector for the root $\alpha$, $e_{-a}=\overline{e_{\alpha}}$ denotes the root vectors for the root $-\alpha$. Then we have the (direct sum) root space decomposition $${\mathfrak{g}}_{c}={\mathfrak{t}}_{c}+\sum\limits_{\alpha\in \Psi}\mathbb{C}e_{\alpha}.$$ This decomposition is orthogonal with respect to the Killing form $B(\cdot,\bar{\cdot})$. Since the Killing form on $\mathfrak{g}$ is negative definite, then, we can modify \cite{hel}(P.176), Thm. 5.5 by the following results.
\begin{theorem1}[see also \cite{Van} Lemma. 4.3.22 and Thm. 4.3.26]\label{Thm3.1}
For each $\alpha\in \Delta^{+}$, let $X_{\alpha}$ be any root
vector, then we have
\begin{equation*}
{[X_{\alpha},X_{-\beta}]}=
\begin{cases}
N_{\alpha,-\beta}X_{\alpha-\beta},& \alpha-\beta\in\Psi,\; \alpha\neq\beta\\
0,& \alpha-\beta \not\in\Psi,\;\alpha\neq\beta\\
B(X_{\alpha},X_{-\alpha})H_{\alpha} \in \mathfrak{t}_{c},& \alpha=\beta\\
\end{cases}
\end{equation*}
\begin{equation*}
\begin{aligned}
N_{\alpha,-\beta}^{2}=&-\frac{q(1-p)}{2}(\alpha,\alpha)B(X_{\alpha},X_{-\alpha}),\\
\end{aligned}
\end{equation*}
where $n\alpha-\beta (p \leq n\leq q)$ is the $\alpha-$series containing $\beta$ and $(\alpha,\alpha)=B(H_{\alpha},H_{\alpha}).$
\end{theorem1}

Let $\mathfrak{p}_{+}=\bigoplus\limits_{\alpha\in
\Delta^{+}}\mathbb{C}e_{\alpha}$ and
$\mathfrak{p}_{-}=\bigoplus\limits_{-\alpha\in
\Delta^{+}}\mathbb{C}e_{\alpha}$. Then from \cite{Siu2},
we have $$T_{p}^{1,0}\Omega=\mathfrak{p}_{+},\;T_{p}^{0,1}\Omega=\mathfrak{p}_{-}.$$ Moreover, $-B(\cdot,\overline{\cdot})$ induces an
invariant metric on $\Omega$. Thus the Hermitian metric
$\langle\cdot,\cdot\rangle$ on $T_{p}^{1,0}\Omega$ is defined by
$$\langle e_{\alpha},e_{\beta}\rangle=\langle
e_{\alpha},\overline{e_{\beta}}\rangle_{R}=-B(e_{\alpha},e_{-\alpha})\;\;(\alpha,\beta
\in \Delta^{+}).$$ The curvature tensor $R$ is given by
$R(X,Y)Z=-[[X,Y],Z].$ The paper \cite{Siu2} also tells us that
$R_{\alpha\overline{\beta}\upsilon\overline{\delta}}$ with respect
to the $\langle \cdot,\cdot\rangle$ can be expressed by
\begin{equation}\label{R}
R_{\alpha\overline{\beta}\upsilon\overline{\delta}}=-\langle[e_{\alpha},e_{-\beta}],[e_{\delta},e_{-\upsilon}]\rangle.
\end{equation}

 It is well known that the Bergman metric is an invariant metric. Hence by \cite{Mok}(Chapter 3, 2.1), we have $g_{B}=a\langle\cdot,\cdot\rangle$ where $a$ is a positive constant. By the Proposition 2 in \cite{Ko} , we know that $[R_{g_{B}}]_{\alpha\overline{\alpha}\alpha\overline{\alpha}}(0)=-2\gamma(\alpha\in\Lambda)$ and $[g_{B}]_{\alpha\overline{\beta}}(0)=\gamma\delta_{\alpha\beta}$. Thus by the definition, we have $${[{\vert R_{g_{B}}\vert}^{2}]}_{z=0}=\frac{1}{\gamma^{4}}\sum\limits_{\alpha,\beta,\upsilon,\delta}
{\vert
[R_{g_{B}}]_{\alpha\overline{\beta}\upsilon\overline{\delta}}(0)\vert}^{2
}.$$ Without loss of generality, we can assume that $\{e_{\alpha}\}$ constitutes the corresponding basis. Hence we have
 \begin{equation}\label{equagB}
 [g_{B}]_{\alpha\overline{\beta}}(0)=a\langle e_{\alpha},e_{\beta}\rangle=\gamma\delta_{\alpha\beta}.
 \end{equation}
 Thus we get ${\vert [R_{{g_{B}}}]_{\alpha\overline{\beta}\upsilon\overline{\delta}}\vert}^{2}(0)=a^{2}{\vert R_{\alpha
 \overline{\beta}\upsilon\overline{\delta}}(0)\vert}^{2}$. Now combined with \cite{borel} Lemma 2.1, we have the following result
\begin{theorem1}\label{Thm3.2}
For an irreducible bounded symmetric domain $\Omega$, we have ${\vert
[R_{{g_{B}}}]_{\alpha\overline{\beta}\upsilon\overline{\delta}}(0)\vert}^{2}$
is an integer with respect to $(\Omega, g_{B})$.
\end{theorem1}
\begin{proof}
Firstly, by the \cite{borel} Lemma 2.1, Thm \ref{Thm3.1} and (\ref{R}), it is not hard to get that
for any $\alpha\;,\beta\;,\upsilon\;,\delta\in \Delta^{+}$
\begin{equation}\label{equ21}
{\vert
[R_{{g_{B}}}]_{\alpha\overline{\beta}\upsilon\overline{\delta}}\vert}^{2}(0)=
\begin{cases}
\gamma^{2}N_{\alpha,-\beta}^{2}N_{\delta,-\upsilon}^{2}&
\alpha-\beta=\delta-\upsilon,\; \delta\neq\upsilon\\
0&\alpha-\beta\neq\delta-\upsilon\\
\gamma^{2}N_{\alpha,-\upsilon}^{4}& \alpha=\beta,\upsilon=\delta,\alpha\neq\upsilon\\
a^{2}B(e_{\alpha},e_{-\alpha})^{4}\langle H_{\alpha},H_{\alpha}\rangle^{2}&\alpha=\beta=\upsilon=\delta.\\
\end{cases}
\end{equation}

For the classical irreducible bounded symmetric domains, we have ${\vert
[R_{{g_{B}}}]_{\alpha\overline{\beta}\upsilon\overline{\delta}}(0)\vert}^{2}$
is an integer with respect to $(\Omega, g_{B})$ by the above arguments. For the two exceptional bounded symmetric domains, by Helgason
\cite{hel}(p523. 7), we know that
$(\alpha,\alpha)=B(H_{\alpha},H_{\alpha})=\frac{1}{\gamma}$
for all $\alpha\in \Delta^{+}$. What's more, by $(\ref{equagB})$, we
know that $B(e_{\alpha},e_{-\alpha})=-\frac{\gamma}{a}$. Hence
combined with \cite{Ko} Proposition 2 and (\ref{equ21}), for $\alpha\in\Lambda$ we
have
\begin{equation}\label{eauRB}
{\vert
[R_{{g_{B}}}]_{\alpha\overline{\beta}\upsilon\overline{\delta}}\vert}^{2}(0)=a^2\frac{\gamma^4}{a^4}B(H_{\alpha},
\overline{H_{\alpha}})^{2}=4\gamma^2.
\end{equation}
Since
$B(H_{\alpha},\overline{H_{\alpha}})=-B(H_{\alpha},H_{\alpha})=-\frac{1}{\gamma}$.
Hence we have $a=\frac{1}{2}$ and
${\vert
[R_{{g_{B}}}]_{\alpha\overline{\beta}\upsilon\overline{\delta}}\vert}^{2}(0)=4\gamma^2$
for $\alpha\in\Delta^{+}$.

 Furthermore, by Thm. \ref{Thm3.1}, we know that $$N_{\alpha,-\beta}^{2}=-\frac{q_{1}(1-p_{1})}{2}\alpha(H_{\alpha})B(e_{\alpha},e_{-\alpha})=\frac{q_{1}(1-p_{1})}
 {2\gamma}
 2\gamma={q_{1}(1-p_{1})}$$ Hence we have $N_{\alpha,-\beta}^{2}$ is an integer. Similarly, we have $N_{\delta,-\upsilon}^{2}$ and $N_{\alpha,-\upsilon}^{2}$ are integers. Then, combined with (\ref{equ21}) and (\ref{eauRB}), it is easy to show that ${\vert [R_{{g_{B}}}]_{\alpha\overline{\beta}\upsilon\overline{\delta}}\vert}^{2}(0)$ is an integer for the two exceptional bounded symmetric domains.
So far we complete the proof.
\end{proof}

\vskip 10pt

\noindent\textbf{Acknowledgments}\quad We sincerely thank the referees, who read the paper very carefully and gave many useful suggestions. The second author was supported by
the National Natural Science Foundation of China (No.11671306).


\begin{thebibliography}{99}
\bibliographystyle{plain}

\bibitem{borel}
Borel, A.: On the curvature tensor of the Hermitian symmetric
manifolds. Ann. Math. \textbf{71}(3), 508-521(1960)

\bibitem{CGR}Cahen, M., Gutt, S., Rawnsley, J.: Quantization of K\"{a}hler manifolds. I: Geometric interpretation of Berezin's quantization.
J. Geom. Phys. \textbf{7}, 45-62 (1990)
\bibitem{Calabi}
Calabi, E., Vesentini, E: On compact, locally symmetric Kahler manifolds. Ann. Math.\textbf{71}(3), 472-507(1960)
\bibitem{Cat}Catlin, D.: The Bergman kernel and a theorem of Tian.
Analysis and geometry in several complex variables (Katata, 1997),
Trends Math., Birkh\"{a}user Boston, Boston, MA, 1-23 (1999)

\bibitem{Donaldson}Donaldson, S.: Scalar curvature and projective
embeddings, I.  J. Diff. Geom. \textbf{59}, 479-522 (2001)

\bibitem{End2}
Engli\v{s}, M.: A Forelli-Rudin construction and asymptotics of
weighted Bergman kernels. J. Funct. Anal. \textbf{177}(2),
257-281(2000)
\bibitem{Eng1}
Engli\v{s}, M.: The asymptotics of a Laplace integral on a Kahler
manifold. J. Reine Angew. Math. \textbf{528}, 1-39(2000)
\bibitem{F-Tu}
Feng, Z. and Tu, Z.: On canonical metrics on Cartan-Hartogs domains.
Math. Z. \textbf{278}(1), 301-320(2014)
\bibitem{hel}
Helgason, S.: Differential geometry, Lie groups, and symmetric spaces. Academic press, 1979.
\bibitem{Ko}
Kor\'{a}nyi, A.: Analytic invariants of bounded symmetric domains.
Proc. Amer. Math. Soc. \textbf{19}(2), 279-284(1968)
%\bibitem{Loi}
%Loi, A.: The Tian-Yau-Zelditch asymptotic expansion for real analytic K\"{a}hler metrics.
%International Journal of Geometric Methods in Modern Physics. \textbf{1}(3), 253-263(2004)

\bibitem{LZ}Loi, A. and Zedda, M.: Balanced metrics on Cartan and Cartan-Hartogs domains. Math. Z. \textbf{270}, 1077-1087 (2012)


\bibitem{Lu}Lu, Z.: On the lower order terms of the asymptotic
expansion of Tian-Yau-Zelditch. Amer. J. Math. \textbf{122}(2),
235-273 (2000)

\bibitem{MM07}Ma, X. and  Marinescu, G.: Holomorphic Morse
inequalities and Bergman kernels. Progress in Mathematics, Vol. 254,
Birkh\v{a}user Boston Inc., Boston, MA (2007)

\bibitem{MM08}Ma, X.
and  Marinescu, G.: Generalized Bergman kernels on symplectic
manifolds. Adv. Math. \textbf{217}(4), 1756-1815 (2008)

\bibitem{MM12}Ma, X. and Marinescu, G.: Berezin-Toeplitz
quantization on K\"{a}hler manifolds. J. reine angew. Math.
\textbf{662}, 1-56 (2012)


\bibitem{Mok}
Mok, N.: Metric rigidity theorems on Hermitian locally symmetric manifolds. World Scientific(1989)

\bibitem{R} Rawnsley, J.: Coherent states and K\"{a}hler manifolds.  Quart. J. Math. Oxford \textbf{28}(2), 403-415 (1977)

\bibitem{Siu2}
Siu, Y.-T.: Strong rigidity of compact quotients of exceptional
bounded symmetric domains. Duke Math. J. \textbf{48}(4),
857-871(1981)
%\bibitem{Siu1}
%Siu, Y.-T.: The complex-analyticity of harmonic maps and the strong
%rigidity of compact Kahler manifolds. Ann. Math. \textbf{112}(1),
%73-111(1980)
\bibitem{Van}
V, S, Varadarajan.: Lie Groups, Lie Algebras, and Their Representation. Spring(1984)

\bibitem{X}Xu, H.: A closed formula for the asymptotic expansion of the Bergman kernel. Commun. Math. Phys. \textbf{314}, 555-585 (2012)

\bibitem{YW}Yin, W.P. and
 Wang,  A.: The equivalence on classical metrics. Sci. China  Ser. A: Mathematics \textbf{50}(2), 183-200 (2007)

\bibitem{Zedda}
Zedda, M.: Canonical metrics on Cartan-Hartogs domains. International Journal of Geometric Methods in Modern Physics. \textbf{9}(01)(2012)

\bibitem{Zeld}Zelditch, S.: Szeg\"{o} kernels and a theorem of
Tian. Internat. Math. Res. Notices, No. 6, 317-331 (1998)

\end{thebibliography}
\end{document}